\theoremstyle{plain}
\newtheorem{Thm}{Theorem}
\newtheorem{Lem}{Lemma}
\theoremstyle{definition}
\theoremstyle{remark}
\def\1{{\bf 1}}
\begin{document}
\title{Symmetric identities on Bernoulli polynomials}
\author{Amy M. Fu}
\email{fu@nankai.edu.cn}
\address{Center for Combinatorics, LPMC-TJKLC, Nankai University, Tianjin 300071, People's Republic of China}
\author{Hao Pan}
\email{haopan79@yahoo.com.cn}
\address{
Department of Mathematics, Shanghai Jiaotong University, Shanghai
200240, People's Republic of China}
\author{Fan Zhang}
\email{zhangfan03@mail.nankai.edu.cn}
\address{Center for Combinatorics, LPMC-TJKLC,  Nankai University, Tianjin 300071, People's Republic of
China} \subjclass[2000]{Primary 11B68; Secondary 05A19, 05A15}
\thanks{This work was supported by the 973
Project, the PCSIRT Project of the Ministry of Education, the
Ministry of Science and Technology, and the National Science
Foundation of China.}
 \maketitle

\vskip 10pt

\begin{abstract} In this paper, we obtain a generalization of
an identity due to Carlitz on Bernoulli polynomials. Then we use
this generalized formula to derive two symmetric identities which
reduce to some known identities on Bernoulli polynomials and
Bernoulli numbers, including the Miki identity. \end{abstract}

\section{Introduction}
\setcounter{Lem}{0}\setcounter{Thm}{0}\setcounter{Cor}{0}
\setcounter{equation}{0}

 The Bernoulli polynomials $B_n(x),\
n=1,2,\ldots$ are given by the generating function:
$$
\sum_{n=0}^\infty\frac{B_n(x)t^n}{n!}=\frac{te^{xt}}{e^t-1}.
$$
In particular, we call $B_n:=B_n(0)$ the $n$-th Bernoulli number.

In \cite{Miki78}, Miki discovered the following identity on
Bernoulli numbers:
\begin{equation}
\label{miki}
\sum_{k=2}^{n-2}\frac{B_kB_{n-k}}{k(n-k)}=\sum_{k=2}^{n-2}\binom{n}{k}\frac{B_kB_{n-k}}{k(n-k)}+2H_n\frac{B_n}{n},
\end{equation}
where
$$
H_n:=\sum_{i=1}^n\frac{1}{i}
$$
is the $n$-th harmonic number. Later several different proofs of
(\ref{miki}) were found by Shirantani and Yokoyama
\cite{ShirantaniYokoyama82}, Gessel \cite{Gessel05}, Dunne and
Schubert \cite{DunneSchubert04}. Furthermore, in the same paper
Dunne and Schubert also proved a similar identity conjectured by
Matiyasevich \cite{Matiyasevich97}:
\begin{equation}
\label{matiyasevich}
(n+2)\sum_{k=2}^{n-2}B_kB_{n-k}=2\sum_{k=2}^{n-2}\binom{n+2}{k}B_kB_{n-k}+n(n+1)B_n,
\end{equation}
On the other hand, using a new difference-differential method, Pan
and Sun \cite{PanSun06} established the following generalizations of
\eqref{miki} and \eqref{matiyasevich} for Bernoulli polynomials:
\begin{align}
\label{mikipoly}
&\sum_{k=1}^{n-1}\frac{B_k(x)B_{n-k}(y)}{k(n-k)}-\sum_{k=1}^{n-1}\binom{n-1}{k-1}\frac{B_k(x-y)B_{n-k}(y)+B_{k}(y-x)B_{n-k}(x)}{k^2}\notag\\
=&H_{n-1}\frac{B_n(x)+B_n(y)}{n}+\frac{B_n(x)-B_n(y)}{n(x-y)},
\end{align}
and
\begin{align}
\label{matiyasevichpoly}
&\sum_{k=0}^{n}B_k(x)B_{n-k}(y)-\sum_{k=0}^{n}\binom{n+1}{k+1}\frac{B_{k}(x-y)B_{n-k}(y)+B_{k}(y-x)B_{n-k}(x)}{k+2}\notag\\
=&\frac{B_{n+1}(x)+B_{n+1}(y)}{(x-y)^2}-\frac{2}{n+2}\cdot\frac{B_{n+2}(x)-B_{n+2}(y)}{(x-y)^3}.
\end{align}

With the help of some symmetric identities on Bernoulli polynomials
given in \cite{Sun03b}, they also proved a polynomial-type extension
of an identity due to Woodock \cite{Woodcock79}:
\begin{equation}
A_{m-1,n}(x)=A_{m,n-1}(x)
\end{equation}
for positive integers $m,n$, where
$$
A_{m,n}(x)=\frac{1}{n}\sum_{k=0}^n\binom{n}{k}(-1)^kB_{m+k}(x)B_{n-k}(2x)-\frac{1}{n}B_m(x)B_n(x).
$$

Subsequently, Sun and Pan \cite{SunPan06} discovered the following
symmetric identity as a generalization of the above identities:
\begin{equation}
\label{sunpan} r\left[\begin{matrix}s&t\\
x&y\end{matrix}\right]_n+ s\left[\begin{matrix}t&r\\
y&z\end{matrix}\right]_n+ t\left[\begin{matrix}r&s\\
z&x\end{matrix}\right]_n=0
\end{equation}
provided that $r+s+t=n$ and $x+y+z=1$, where
$$
\left[\begin{matrix}s&t\\
x&y\end{matrix}\right]_n=\sum_{k=0}^n(-1)^k\binom{s}{k}\binom{t}{n-k}B_{n-k}(x)B_k(y).
$$

Motivated by the results of Dilcher \cite{Dilcher96}, the referee of
\cite{SunPan06} asked whether there exists a generalization of
(\ref{sunpan}) involving sums of products of more Bernolli
polynomials. In this paper, we shall give such a generalization.
\begin{Thm} \label{t1} Let $m$ and $n$ be positive
integers. Suppose that $r_1,\ldots,r_m$ are arbitrary complex
numbers. Then
\begin{align}
\label{bernoulli}
&r_{m+1}\sum_{\substack{k_1,\cdots,k_m\geq 0\\
k_1+\cdots+k_m=n}}\prod_{j=1}^m\binom{r_j}{k_j}B_{k_j}(x_j)\notag\\
=&-\sum_{i=1}^mr_i\sum_{\substack{k_1,\ldots,k_m\geq 0\\
k_1+\cdots+k_m=n}}\binom{r_{m+1}}{k_i}B_{k_i}(1-x_i)\prod_{\substack{1\leq
j\leq m\\j\not=i}}\binom{r_j}{k_j}B_{k_j}(x_j-x_i+\1_{j>i}),
\end{align}
where $r_{m+1}=n-r_1-\cdots-r_m$ and $\1_{j>i}=1$ or $0$ according
to whether $j>i$.
\end{Thm}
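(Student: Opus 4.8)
\emph{Sketch of the intended proof.} Write
$$
S(\mathbf r;\mathbf x):=\sum_{\substack{k_1,\dots,k_m\ge 0\\ k_1+\cdots+k_m=n}}\prod_{j=1}^m\binom{r_j}{k_j}B_{k_j}(x_j),
$$
so that the left side of \eqref{bernoulli} is $r_{m+1}S(\mathbf r;\mathbf x)$. For $1\le i\le m$ define the \emph{pivoted} data $\mathbf r^{(i)},\mathbf x^{(i)}$ by $r^{(i)}_i=r_{m+1}$, $r^{(i)}_j=r_j$ for $j\ne i$, and $x^{(i)}_i=1-x_i$, $x^{(i)}_j=x_j-x_i+\1_{j>i}$ for $j\ne i$; then the $i$-th summand on the right of \eqref{bernoulli} is exactly $-\,r_i\,S(\mathbf r^{(i)};\mathbf x^{(i)})$, and a one-line computation gives $n-r^{(i)}_1-\cdots-r^{(i)}_m=r_i$. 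Thus the pivot at $i$ merely transposes the ``slot'' $i$ with the distinguished ``slot'' $m+1$, and \eqref{bernoulli} becomes the assertion that $r_{m+1}S$ together with its $m$ pivots vanishes --- a statement symmetric in the $m+1$ slots once one recalls the normalization $r_1+\cdots+r_{m+1}=n$. (Umbrally, with independent Bernoulli umbrae $\mathcal B_j$ one has $S(\mathbf r;\mathbf x)=[t^n]\prod_{j=1}^m\bigl(1+(\mathcal B_j+x_j)t\bigr)^{r_j}$, a pivot reflects one factor and translates the others, and the $+\1_{j>i}$ is precisely the unit translation a Bernoulli polynomial picks up from $B_k(x+1)-B_k(x)=kx^{k-1}$.)

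The engine is a generalized Carlitz identity --- a closed evaluation, or a one-step reduction, for $S(\mathbf r;\mathbf x)$ --- which I would establish first, by induction on $m$ (equivalently, by manipulating the generating function above), using only three structural facts about Bernoulli polynomials: the difference relation just quoted, the reflection $B_k(1-x)=(-1)^kB_k(x)$, and the addition theorem $B_k(x+y)=\sum_{\ell\ge 0}\binom{k}{\ell}B_\ell(x)\,y^{k-\ell}$. The base of the induction reduces to Carlitz's identity, and in the degenerate case $m=1$ it is just the reflection formula combined with the elementary relation $(n-r)\binom{r}{n}=(-1)^{n+1}r\binom{n-r}{n}$. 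For the inductive step I would peel off the last variable $x_m$, re-expand $\binom{r_m}{k_m}B_{k_m}(x_m)$ around the relevant pivot point by the addition theorem, and feed the resulting lower-order sums into the $(m-1)$-variable case; the difference relation then reassembles the pieces and produces the shifted arguments with their $\1_{j>i}$ corrections.

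Granting the lemma, \eqref{bernoulli} follows by applying it to each of the $m+1$ quantities $r_{m+1}S(\mathbf r;\mathbf x)$ and $-r_iS(\mathbf r^{(i)};\mathbf x^{(i)})$ ($i=1,\dots,m$) and checking that all resulting terms cancel --- either against one another in pairs or after recombining telescopically --- which is where the constraint $r_1+\cdots+r_{m+1}=n$ enters essentially. Specializing $m=2$ (and using $B_k(1-x)=(-1)^kB_k(x)$ together with a reindexing) recovers the Sun--Pan identity \eqref{sunpan}; further specializations of the $x_j$ and $r_j$ belong to the applications.

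The step I expect to be the genuine obstacle is the bookkeeping in the inductive step: tracking which variables must be translated by $1$ --- i.e.\ how the indicator $\1_{j>i}$ transforms --- as one variable is removed and the pivot index is relabeled, all while keeping every intermediate formula a true \emph{polynomial} identity in the complex parameters $r_1,\dots,r_m$, so that one cannot pass to nonnegative integer values and must work with falling factorials throughout. A secondary nuisance is that the inner summation ranges shift under the addition theorem, so one must be careful that no boundary contribution is silently discarded when the indices are relabeled.
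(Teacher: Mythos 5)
Your overall architecture --- first establish a generalized Carlitz identity, then deduce the symmetric identity --- agrees with the paper's strategy only at the level of labels, because at both stages the step that actually carries the proof is absent. The ``engine'' lemma is never stated: ``a closed evaluation, or a one-step reduction, for $S(\mathbf r;\mathbf x)$'' is not something one can apply, and the induction on $m$ by which you propose to prove it is exactly the part you yourself flag as the unresolved obstacle. The paper needs no induction and no addition theorem: its Lemma 2.1 is an identity for \emph{integer} indices $n_1,\dots,n_m$, namely $\sum_{i=1}^m n_iB_{n_i-1}(x_i)\prod_{j\neq i}B_{n_j}(x_j)=\sum_{i=1}^m n_i\sum_{k_1+\cdots+k_m=n_1+\cdots+n_m}B_{k_i-1}(x_i)\prod_{j\neq i}\binom{n_j}{k_j}B_{k_j}(x_j-x_i+\1_{j>i})$, and it is proved in one stroke by inserting the telescoping decomposition $e^{t_1+\cdots+t_m}-1=\sum_{i=1}^m(e^{t_i}-1)e^{t_{i+1}+\cdots+t_m}$ into the multivariate generating function $(t_1+\cdots+t_m)\prod_{j}t_je^{x_jt_j}/(e^{t_j}-1)$ and comparing coefficients of $t_1^{n_1}\cdots t_m^{n_m}$. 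That telescoping identity is the single idea that produces the shifted arguments $x_j-x_i+\1_{j>i}$, and it does not appear anywhere in your sketch; without it (or a worked-out substitute) the ``bookkeeping'' you defer is the whole proof.

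The passage from the lemma to Theorem \ref{t1} is also not established in your proposal. You suggest applying the (unstated) lemma to each of the $m+1$ pivoted quantities and ``checking that all resulting terms cancel,'' but no cancellation scheme is exhibited, and it is not clear that one exists in that form; your pivot observation ($n-r_1^{(i)}-\cdots-r_m^{(i)}=r_i$) shows the identity is \emph{consistent} with slot symmetry, not that it holds. The paper instead applies the lemma once, to the left side only: on the constraint $k_1+\cdots+k_m=n$ one writes $r_{m+1}=\sum_i(k_i-r_i)$, uses $(r_i-k_i)\binom{r_i}{k_i}=(k_i+1)\binom{r_i}{k_i+1}$ to shift the $i$-th index, which puts the sum exactly in the form $\sum_i k_iB_{k_i-1}(x_i)\prod_{j\neq i}B_{k_j}(x_j)$ covered by the lemma; after substituting, the free summation over $k_1+\cdots+k_m=n+1$ collapses by the Chu--Vandermonde identity to $(-1)^{l_i}\binom{n-r_1-\cdots-r_m}{l_i}$, and the reflection $B_l(1-x)=(-1)^lB_l(x)$ absorbs the sign into the argument $1-x_i$, yielding \eqref{bernoulli} directly. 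So the index-shifting step, the Chu--Vandermonde collapse, and the precise lemma are all missing from your write-up: what you have is a correct reading of the shape of the identity plus a plan whose two load-bearing components are deferred, i.e.\ a genuine gap rather than an alternative proof.
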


Let us explain why (\ref{bernoulli}) is equivalent to
(\ref{sunpan}) when $m=2$. It is not difficult to check that
$B_k(1-x)=(-1)^kB_k(x)$. Hence in view of (\ref{bernoulli}),
\begin{eqnarray*}
\lefteqn{\sum_{k=0}^n\binom{s}{k}B_k(1-y)\binom{t}{n-k}B_{n-k}(x)}\\
&=&-s\sum_{k=0}^n\binom{r}{k}B_k(1-(1-y))\binom{t}{n-k}B_{n-k}(x-(1-y)+1)\\
&&-t\sum_{k=0}^n\binom{r}{k}B_k(1-x)\binom{s}{n-k}B_{n-k}((1-y)-x)\\
&=&-s\sum_{k=0}^n(-1)^k\binom{t}{k}B_{k}(1-x-y)\binom{r}{n-k}B_{n-k}(y)\\
&&-t\sum_{k=0}^n(-1)^k\binom{r}{k}B_k(x)\binom{s}{n-k}B_{n-k}(1-x-y),
\end{eqnarray*}
which is indeed \eqref{sunpan} by setting $z=1-x-y$.

\section{Proof of Theorem \ref{t1}}
\setcounter{Lem}{0}\setcounter{Thm}{0}\setcounter{Cor}{0}
\setcounter{equation}{0}

For a power series $f(t_1,\ldots,t_m)$, let $[t_1^{n_1}\cdots
t_m^{n_m}]f(t_1,\ldots,t_m)$ denote the coefficient of
$t_1^{n_1}\cdots t_m^{n_m}$ in $f(t_1,\ldots,t_m)$. The following
lemma is a generalization of an identity due to Carlitz \cite[Eq.
(7)]{Carlitz59}:
\begin{Lem}
\label{l1}
\begin{multline}
\sum_{i=1}^mn_iB_{n_i-1}(x_i)\prod_{\substack{1\leq j\leq m\\j\not=i}}B_{n_j}(x_j)\notag\\
=\sum_{i=1}^mn_i\sum_{\substack{k_1,\ldots,k_m\geq 0\\
k_1+\cdots+k_m=n_1+\cdots+n_m}}B_{k_i-1}(x_i)\prod_{\substack{1\leq
j\leq m\\j\not=i}}\binom{n_j}{k_j}B_{k_j}(x_j-x_i+\1_{j>i}).
\end{multline}
\end{Lem}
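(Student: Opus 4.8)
The plan is to prove Lemma~\ref{l1} by comparing exponential generating functions in $t_1,\dots,t_m$. Set $g(x,t):=te^{xt}/(e^t-1)=\sum_{n\ge0}B_n(x)t^n/n!$, so that $B_n'(x)=nB_{n-1}(x)$ gives $\sum_{n\ge0}nB_{n-1}(x)t^n/n!=t\,g(x,t)$. Multiplying the left-hand side of the lemma by $\prod_{j=1}^m t_j^{n_j}/n_j!$ and summing over all $n_1,\dots,n_m\ge0$, each summand factors and the total collapses to $\bigl(\sum_{i=1}^m t_i\bigr)\prod_{j=1}^m g(x_j,t_j)$. It therefore suffices to show that the right-hand side of the lemma has the same generating function.

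For the right-hand side I would treat the $i$-th summand separately. Since $\binom{n_j}{k_j}=0$ unless $0\le k_j\le n_j$, put $l_j:=n_j-k_j\ge0$ for $j\ne i$; the constraint $k_1+\dots+k_m=n_1+\dots+n_m$ then forces $k_i=n_i+\sum_{j\ne i}l_j$. Using $\binom{k_j+l_j}{k_j}\big/(k_j+l_j)!=1/(k_j!\,l_j!)$, the sums over the $k_j$ with $j\ne i$ separate and produce $\prod_{j\ne i}g(x_j-x_i+\1_{j>i},t_j)$, while the remaining sum over $n_i$ and the $l_j$ equals $t_i\sum_{(l_j)_{j\ne i}\ge0}\bigl(\sum_{p\ge0}B_{p+L}(x_i)t_i^p/p!\bigr)\prod_{j\ne i}t_j^{l_j}/l_j!$, where $L:=\sum_{j\ne i}l_j$.

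The decisive step is the observation $\sum_{p\ge0}B_{p+L}(x_i)t_i^p/p!=\partial_{t_i}^{L}g(x_i,t_i)$, which turns the sum over the $l_j$ into $\prod_{j\ne i}e^{t_j\partial_{t_i}}$ acting on $g(x_i,t_i)$, i.e.\ the substitution $t_i\mapsto t_i+\sum_{j\ne i}t_j=:T$. Hence the $i$-th summand of the generating function of the right-hand side is $t_i\,g(x_i,T)\prod_{j\ne i}g(x_j-x_i+\1_{j>i},t_j)$. Substituting $g(x,t)=te^{xt}/(e^t-1)$, a common factor $\bigl(\prod_j t_j\bigr)\bigl(\sum_j t_j\bigr)e^{\sum_j x_jt_j}$ appears on both sides (on the right the $x_i$-dependent exponentials cancel and the indicators $\1_{j>i}$ leave $e^{t_{i+1}+\dots+t_m}$), and after cancelling it and clearing the denominators $\prod_j(e^{t_j}-1)$ the whole identity reduces to the purely exponential statement $e^{T}-1=\sum_{i=1}^m e^{t_{i+1}+\dots+t_m}\bigl(e^{t_i}-1\bigr)$, whose right-hand side telescopes to $e^{t_1+\dots+t_m}-1$.

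The main obstacle is not any single computation but the bookkeeping on the right-hand side: one must recognise that the constraint $\sum_j k_j=\sum_j n_j$ is exactly what makes the shift-operator collapse work, and handle the degenerate cases with care (the terms with $n_i=0$, which vanish on both sides; the empty products when $m=1$; and the formal $B_{-1}$, which occurs only inside a term already annihilated by the factor $n_i$). Once the generating-function identity has been reduced to the telescoping exponential identity, the rest is routine. (One could also argue by induction on $m$ with Carlitz's case $m=2$ as the base, but the generating-function route seems cleaner.)
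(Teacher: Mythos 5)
Your proof is correct and takes essentially the same route as the paper: both arguments rest on the generating function $(t_1+\cdots+t_m)\prod_{j=1}^m t_je^{x_jt_j}/(e^{t_j}-1)$ and the telescoping identity $e^{t_1+\cdots+t_m}-1=\sum_{i=1}^m(e^{t_i}-1)e^{t_{i+1}+\cdots+t_m}$, which is exactly your final exponential identity. The only difference is direction of presentation: the paper decomposes the product generating function by that identity and then extracts the coefficient of $t_1^{n_1}\cdots t_m^{n_m}$, while you resum both sides of the lemma (collapsing the right-hand side with the Taylor-shift observation) and verify that the two power series agree.
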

\begin{proof}
Consider
\begin{eqnarray}
\label{l1e1}
\lefteqn{(t_1+\cdots+t_m)\prod_{i=1}^m\frac{t_je^{x_jt_j}}{e^{t_j}-1}}\notag\\
&=&\frac{(t_1+\cdots+t_m)}{e^{t_1+\cdots+t_m}-1}\bigg(\sum_{i=1}^m(e^{t_i}-1)e^{\sum_{i<j\leq
m}t_j}\bigg)\prod_{j=1}^m\frac{t_je^{x_jt_j}}{e^{t_j}-1}\notag\\
&=&\sum_{i=1}^m\frac{t_i(t_1+\cdots+t_m)e^{x_i(t_1+\cdots+t_m)}}{e^{t_1+\cdots+t_m}-1}\prod_{\substack{1\leq j\leq m\\
j\not=i}}\frac{t_je^{(x_j-x_i+\1_{j>i})t_j}}{e^{t_j}-1}.
\end{eqnarray}
Clearly, we have
\begin{equation*}
[t_1^{n_1}\cdots
t_m^{n_m}](t_1+\cdots+t_m)\prod_{i=1}^m\frac{t_je^{x_jt_j}}{e^{t_j}-1}
=\sum_{i=1}^m\frac{B_{n_i-1}(x_i)}{(n_i-1)!}\prod_{\substack{1\leq
j\leq m\\j\not=i}}\frac{B_{n_j}(x_j)}{n_j!}.
\end{equation*}
Now, for each $1\leq i\leq m$,
\begin{align*}
&[t_1^{n_1}\cdots t_m^{n_m}]\frac{t_i(t_1+\cdots+t_m)e^{x_i(t_1+\cdots+t_m)}}{e^{t_1+\cdots+t_m}-1}\prod_{\substack{1\leq j\leq m\\
j\not=i}}\frac{t_je^{(x_j-x_i+\1_{j>i})t_j}}{e^{t_j}-1}\\
=&\sum_{\substack{k_1,\cdots,k_{i-1},k_{i+1},\cdots,k_m\geq
0}}\frac{B_{k_1+\ldots+k_{i-1}+k_{i+1}+\ldots+k_m+n_i-1}(x_i)}{k_1!\cdots
k_{i-1}!(n_i-1)!k_{i+1}!\cdots k_m!}
\prod_{\substack{1\leq j\leq m\\
j\not=i}}\frac{B_{n_j-k_j}(x_j-x_i+\1_{j>i})}{(n_j-k_j)!}.
\end{align*}
Equating the coefficients of $t_1^{n_1}\cdots t_m^{n_m}$ on both
sides of \eqref{l1e1} gives the desired identity.
\end{proof}
\begin{proof}[Proof of Theorem \ref{t1}] Applying Lemma \ref{l1}, we have
\begin{align*}
&(n-r_1-\cdots-r_m)\sum_{\substack{k_1,\cdots,k_m\geq 0\\
k_1+\cdots+k_m=n}}\prod_{j=1}^m\binom{r_j}{k_j}B_{k_j}(x_j)\\
=&-\sum_{i=1}^m(k_i+1)\sum_{\substack{k_1,\cdots,k_m\geq 0\\
k_1+\cdots+k_m=n}}
\binom{r_i}{k_i+1}B_{k_i}(x_i)\prod_{\substack{1\leq j\leq m\\ j\not=i}}\binom{r_j}{k_j}B_{k_j}(x_j)\\
=&-\sum_{\substack{k_1,\cdots,k_m\geq 0\\
k_1+\cdots+k_m=n+1}}
\prod_{j=1}^m\binom{r_j}{k_j}\bigg(\sum_{i=1}^mk_iB_{k_i-1}(x_i)\prod_{\substack{1\leq j\leq m\\ j\not=i}}B_{k_j}(x_j)\bigg)\\
=&-\sum_{\substack{k_1,\cdots,k_m\geq 0\\
k_1+\cdots+k_m=n+1}}
\prod_{j=1}^m\binom{r_j}{k_j}\sum_{i=1}^mk_i\sum_{\substack{l_1,\ldots,l_m\geq 0\\
l_1+\cdots+l_m=n}}B_{l_i}(x_i)\prod_{\substack{1\leq j\leq
m\\j\not=i}}\binom{k_j}{l_j}B_{l_j}(x_j-x_i+\1_{j>i})\\
=&
-\sum_{i=1}^mr_i\sum_{\substack{l_1,\ldots,l_m\geq 0\\
l_1+\cdots+l_m=n}}B_{l_i}(x_i)\prod_{\substack{1\leq j\leq
m\\j\not=i}}\binom{r_j}{l_j}B_{l_j}(x_j-x_i+\1_{j>i})\\
&\cdot\sum_{\substack{k_1,\cdots,k_m\geq 0\\
k_1+\cdots+k_m=n+1}}\binom{r_i-1}{k_i-1}\prod_{\substack{1\leq
j\leq m\\ j\not=i}}\binom{r_j-l_j}{k_j-l_j}.
\end{align*}
By the Chu-Vandermonde identity, we have
\begin{align*}
&\sum_{\substack{k_1,\cdots,k_m\geq 0\\
k_1+\cdots+k_m=n+1}}\binom{r_i-1}{k_i-1}\prod_{\substack{1\leq
j\leq m\\ j\not=i}}\binom{r_j-l_j}{k_j-l_j}\\
=&\binom{r_1+\cdots+r_m-1-l_1-\cdots-l_{i-1}-l_{i+1}-\cdots-l_m}
{k_1+\cdots+k_m-1-l_1-\cdots-l_{i-1}-l_{i+1}-\cdots-l_m}\\
=&\binom{r_1+\cdots+r_m-1-n+l_i}
{l_i}\\
=&(-1)^{l_i}\binom{n-r_1-\cdots-r_m} {l_i}.
\end{align*}
This completes the proof.
\end{proof}

\section{A generalization of Dunne and Schubert's identity}

 In \cite{DunneSchubert04}, Dunne and Schubert also proposed
an extension of Miki's identity (\ref{miki}) involving the gamma
function $\Gamma(z)$:
\begin{align*}
&\frac{1}{\Gamma(2p+2n)}\sum_{k=1}^{n-1} B_{2k}B_{2n-2k}
\frac{\Gamma(p+2k)\Gamma(p+2n-2k)}{\Gamma(2k+1)\Gamma(2n-2k+1)}\\
=&2\Gamma(p+1)\sum_{k=1}^n \frac{B_{2k}B_{2n-2k}}{(2k)!(2n-2k)!}
\frac{\Gamma(p+2k)\Gamma(2p+2n-1)}{\Gamma(2p+2k+1)}+\frac{2B_{2n}}{(2n)!}\sum_{k=1}^{2n-1}\beta(p+k,p+1),
\end{align*}
where $\beta(a,b)=\Gamma(a)\Gamma(b)/\Gamma(a+b)$ is the beta
function. However, applying Lemma \ref{l1} and the identity
$$
\frac{\Gamma(p+k)}{\Gamma(k+1)}=(-1)^k\Gamma(p)\binom{-p}{k}
$$
for $p\not\in\{0,-1,-2,\ldots\}$, we are led to the following
generalization.
\begin{Thm}
\label{t2} Let $m$ and $n$ be positive integers. Suppose that
$p_1,\ldots,p_m$ are non-integral complex numbers. Then
\begin{align}
\label{bernoulliNN}
&\Gamma(p_{m+1}+1)\sum_{\substack{k_1,\cdots,k_m\geq 0\\
k_1+\cdots+k_m=n}}\prod_{j=1}^m\frac{\Gamma(p_j+k_j)}{\Gamma(k_j+1)}B_{k_j}(x_j)\notag\\
=&-\sum_{i=1}^m\Gamma(p_i+1)\sum_{\substack{k_1,\ldots,k_m\geq 0\\
k_1+\cdots+k_m=n}}\frac{\Gamma(p_{m+1}+k_i)}{\Gamma(k_i+1)}B_{k_i}(1-x_i)\prod_{\substack{1\leq
j\leq
m\\j\not=i}}\frac{\Gamma(p_j+k_j)}{\Gamma(k_j+1)}B_{k_j}(x_j-x_i+\1_{j>i}),
\end{align}
where $p_{m+1}=-(p_1+\cdots+p_m+n)$.
\end{Thm}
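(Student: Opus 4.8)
The plan is to obtain \eqref{bernoulliNN} as a specialization of Theorem \ref{t1}, rather than reproving it from Lemma \ref{l1} directly. Both sides of \eqref{bernoulli} are polynomials in the variables $r_1,\ldots,r_m$ (recall that $r_{m+1}=n-r_1-\cdots-r_m$ and that each $\binom{r}{k}$ is a polynomial in $r$), so \eqref{bernoulli} is valid for every complex choice of the $r_j$. I would put $r_j=-p_j$ for $1\le j\le m$, which forces $r_{m+1}=n+p_1+\cdots+p_m=-p_{m+1}$, and then rewrite every binomial coefficient as a ratio of gamma functions by the quoted identity in the form
$$
\binom{-p}{k}=(-1)^k\,\frac{\Gamma(p+k)}{\Gamma(p)\,\Gamma(k+1)}\qquad(p\notin\{0,-1,-2,\ldots\}).
$$
The hypothesis that $p_1,\ldots,p_m$ are non-integral is exactly what makes this conversion legitimate and reversible (each $\Gamma(p_j)$ and each $\Gamma(p_j+k_j)$ with $k_j\ge0$ is then finite and nonzero); one should also assume $p_{m+1}\notin\Z$, or simply read \eqref{bernoulliNN} for generic parameters and recover the general admissible case by continuity, so that the gamma values occurring on both sides are finite.

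The remaining work is pure bookkeeping of signs and gamma factors. After the substitution, the product $\prod_{j=1}^m\binom{-p_j}{k_j}$ on the left of \eqref{bernoulli} supplies the sign $(-1)^{k_1+\cdots+k_m}=(-1)^n$ and the constant $\big(\prod_{j=1}^m\Gamma(p_j)\big)^{-1}$, while the prefactor $r_{m+1}$ equals $-p_{m+1}$. On the right, the $i$-th summand picks up the sign $(-1)^{k_i+\sum_{j\not=i}k_j}=(-1)^n$, the constant $\big(\Gamma(p_{m+1})\prod_{j\not=i}\Gamma(p_j)\big)^{-1}$, and the prefactor $-r_i=p_i$. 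Multiplying the whole identity by $(-1)^n\prod_{j=1}^m\Gamma(p_j)$ cancels the common sign and clears all the $\Gamma(p_j)$ from the denominators, leaving the prefactor $-p_{m+1}$ on the left and, in the $i$-th term on the right, the surviving weight $p_i\,\Gamma(p_i)/\Gamma(p_{m+1})$. Multiplying once more by $-\Gamma(p_{m+1})$ and invoking $p\,\Gamma(p)=\Gamma(p+1)$ converts $-p_{m+1}\Gamma(p_{m+1})$ into $\Gamma(p_{m+1}+1)$ on the left and each $p_i\Gamma(p_i)$ into $\Gamma(p_i+1)$ on the right (with an overall minus sign). The Bernoulli arguments $x_j$, $1-x_i$ and $x_j-x_i+\1_{j>i}$ are untouched throughout, so one recovers \eqref{bernoulliNN} verbatim.

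I do not expect a real obstacle here. The only points to watch are that the two factors $(-1)^n$ genuinely cancel --- they do, because on both sides the indices $k_1,\ldots,k_m$ sum to $n$ in every monomial --- and that the two successive $\Gamma$-shifts are applied with exactly the right constants, so that the net prefactor comes out to be $\Gamma(p_{m+1}+1)$ on the left and $-\Gamma(p_i+1)$ on the right and not some rescaling of these. As an alternative, one could dispense with Theorem \ref{t1} and imitate its proof step by step: leave the indices $n_j$ free in Lemma \ref{l1}, sum against $\prod_j\Gamma(p_j+k_j)/\Gamma(k_j+1)$, and collapse the inner sums with the Chu--Vandermonde identity exactly as in the proof of Theorem \ref{t1}. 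That route reruns essentially the same computation, so the specialization above is the more economical one.
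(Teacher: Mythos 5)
Your proposal is correct and follows exactly the route the paper indicates (it gives no detailed proof, only pointing to Lemma \ref{l1}/Theorem \ref{t1} together with the identity $\Gamma(p+k)/\Gamma(k+1)=(-1)^k\Gamma(p)\binom{-p}{k}$): specialize $r_j=-p_j$ in \eqref{bernoulli}, convert binomials to gamma ratios, and clear the factors $(-1)^n$ and $\Gamma(p_j)$, $\Gamma(p_{m+1})$ using $p\,\Gamma(p)=\Gamma(p+1)$. Your added remark that one should assume $p_{m+1}$ is not a nonpositive integer (or argue by genericity/continuity) is a sensible precaution that the paper's statement glosses over.
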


\end{document}